\def\NZQ{\mathbb}               % the font for N,Z,Q,R,C
\def\QQ{{\NZQ Q}}
\def\ZZ{{\NZQ Z}}
\def\RR{{\NZQ R}}
\def\frk{\mathfrak}               % font for "Fraktur"
\def\Phi{{\frk N}}
\def\opn#1#2{\def#1{\operatorname{#2}}} % to make operators
\opn\chara{char} 
\opn\length{\ell} 
\opn\pd{pd} 
\opn\rk{rk}
\opn\projdim{proj\,dim} 
\opn\injdim{inj\,dim} 
\opn\rank{rank}
\opn\depth{depth} 
\opn\grade{grade} 
\opn\height{height}
\opn\embdim{emb\,dim} 
\opn\codim{codim}
\opn\Tr{Tr} 
\opn\bigrank{big\,rank}
\opn\superheight{superheight}
\opn\lcm{lcm}
\opn\trdeg{tr\,deg}%\emph{
\opn\reg{reg} 
\opn\lreg{lreg} 
\opn\ini{in} 
\opn\lpd{lpd}
\opn\size{size}
\opn\mult{mult}
\opn\dist{dist}
\opn\cone{cone}
\opn\lex{lex}
\opn\rev{rev}
\opn\div{div} \opn\Div{Div} \opn\cl{cl} \opn\Cl{Cl}
\opn\Spec{Spec} \opn\Supp{Supp} \opn\supp{supp} \opn\Sing{Sing}
\opn\Ass{Ass} \opn\Min{Min}
\opn\Ann{Ann} \opn\Rad{Rad} \opn\Soc{Soc}
\opn\Syz{Syz} \opn\Im{Im} \opn\Ker{Ker} \opn\Coker{Coker}
\opn\Am{Am} \opn\Hom{Hom} \opn\Tor{Tor} \opn\Ext{Ext}
\opn\End{End} \opn\Aut{Aut} \opn\id{id} \opn\ini{in}
\opn\nat{nat}
\opn\pff{pf}%   \pf exists already
\opn\Pf{Pf} \opn\GL{GL} \opn\SL{SL} \opn\mod{mod} \opn\ord{ord}
\opn\Gin{Gin}
\opn\Hilb{Hilb}\opn\adeg{adeg}\opn\std{std}\opn\ip{infpt}
\opn\Pol{Pol}
\opn\sat{sat}
\opn\Var{Var}
\opn\Gen{Gen}
\opn\aff{aff} \opn\con{conv} \opn\relint{relint} \opn\st{st}
\opn\lk{lk} \opn\cn{cn} \opn\core{core} \opn\vol{vol}
\opn\link{link} \opn\star{star}
\opn\gr{gr}
\def\Pc{{\mathcal P}}
\def\Qc{{\mathcal Q}}
\def\pot#1#2{#1[\kern-0.28ex[#2]\kern-0.28ex]}
\opn\dirlim{\underrightarrow{\lim}}
\opn\inivlim{\underleftarrow{\lim}}
\def\Implies{\ifmmode\Longrightarrow \else
        \unskip${}\Longrightarrow{}$\ignorespaces\fi}
\def\implies{\ifmmode\Rightarrow \else
        \unskip${}\Rightarrow{}$\ignorespaces\fi}
\def\iff{\ifmmode\Longleftrightarrow \else
        \unskip${}\Longleftrightarrow{}$\ignorespaces\fi}
\newtheorem{Theorem}{Theorem}%[section]
\newtheorem{Lemma}[Theorem]{Lemma}
\newtheorem{Remark}[Theorem]{Remark}
\let\epsilon\varepsilon
\let\phi=\varphi
\let\kappa=\varkappa
\def\qed{\ifhmode\textqed\fi
      \ifmmode\ifinner\quad\qedsymbol\else\dispqed\fi\fi}
\def\textqed{\unskip\nobreak\penalty50
       \hskip2em\hbox{}\nobreak\hfil\qedsymbol
       \parfillskip=0pt \finalhyphendemerits=0}
\def\dispqed{\rlap{\qquad\qedsymbol}}
\opn\dis{dis}
\opn\height{height}
\opn\dist{dist}
\def\pnt{{\raise0.5mm\hbox{\large\bf.}}}
\opn\Lex{Lex}
\begin{document}
\title{Ehrhart polynomials with negative coefficients}
\author{Takayuki Hibi, Akihiro Higashitani, Akiyoshi Tsuchiya 
\\ and Koutarou Yoshida}
\thanks{
{\bf 2010 Mathematics Subject Classification:}
Primary 52B20; Secondary 52B11. \\
\, \, \, {\bf Keywords:}
integral convex polytope,
Ehrhart polynomial, $\delta$-vector.}
\address{Takayuki Hibi,
Department of Pure and Applied Mathematics,
Graduate School of Information Science and Technology,pe
Osaka University,
Toyonaka, Osaka 560-0043, Japan}
\email{hibi@math.sci.osaka-u.ac.jp}
\address{Akihiro Higashitani,
Department of Pure and Applied Mathematics,
Graduate School of Information Science and Technology,
Osaka University,
Toyonaka, Osaka 560-0043, Japan}
\email{a-higashitani@cr.math.sci.osaka-u.ac.jp}
\address{Akiyoshi Tsuchiya,
Department of Pure and Applied Mathematics,
Graduate School of Information Science and Technology,
Osaka University,
Toyonaka, Osaka 560-0043, Japan}
\email{u619884k@ecs.osaka-u.ac.jp}
\address{Koutarou Yoshida,
Department of Pure and Applied Mathematics,
Graduate School of Information Science and Technology,
Osaka University,
Toyonaka, Osaka 560-0043, Japan}
\email{u912376b@ecs.osaka-u.ac.jp}
\begin{abstract}
It is shown that, for each $d \geq 4$, there exists
an integral convex polytope $\Pc$ 
of dimension $d$ such that each of the coefficients of 
$n, n^{2}, \ldots, n^{d-2}$ of its Ehrhart polynomial
$i(\Pc,n)$ is negative.  
\end{abstract}
\maketitle
% \section*{Introduction}
In his talk of the Clifford Lectures at Tulane University, 25--27 March 2010,
Richard Stanley gave an Ehrhart polynomial with
a negative coefficient.  More precisely,
the polynomial $\frac{\,13\,}{6}n^{3} + n^{2} - \frac{\,1\,}{6}n + 1$
is the Ehrhart polynomial of
the tetrahedron in $\RR^{3}$
with vertices $(0,0,0), (1,0,0), (0,1,0)$ and $(1,1,13)$.
See \cite[Example 3.22]{BeckRobins}.
His talk naturally inspired us to find
integral convex polytopes
of dimension $\geq 4$ whose Ehrhart polynomials possess negative coefficients.
Consult 
\cite[Part II]{HibiRedBook} and \cite[pp.~235--241]{StanleyEC}
for fundamental materials on Ehrhart polynomials.

A convex polytope is called {\em integral} if any of its vertices has
integer coordinates.  
Let $\Pc \subset \RR^N$ be an integral convex polytope of dimension $d$
and $\partial \Pc$ the boundary of $\Pc$. 
We introduce the function $i(\Pc,n)$ by setting
\[
i(\Pc,n) = \sharp(n\Pc \cap \ZZ^N), 
\, \, \, \, \, 
%i^*(\Pc,n)=\sharp(n (\Pc \setminus \partial \Pc) \cap \ZZ^N)
% \, \, \, \, \, 
\text{for} 
 \, \, \, \, \, n = 1, 2, \ldots,
\] 
where $n\Pc = \{ \, n \alpha \, : \, \alpha \in \Pc \, \}$ 
and where $\sharp(X)$ is the cardinality of a finite set $X$.
The study on $i(\Pc,n)$ originated in Ehrhart \cite{Ehrhart} who showed that 
$i(\Pc,n)$ is a polynomial in $n$ of degree $d$ with $i(\Pc,0) = 1$.
Furthermore, the coefficients of $n^{d}$ and $n^{d-1}$ of $i(\Pc,n)$
are always positive (\cite[Corollary 3.20 and Theorem 5.6]{BeckRobins}).
We say that $i(\Pc,n)$ is the {\em Ehrhart polynomial} of $\Pc$. 

%\begin{itemize}
%\item $i(\Pc,n)$ is a polynomial in $n$ of degree $d$; 
%\item $i(\Pc,0) = 1$. 
%\item (loi de r\'eciprocit\'e) 
%$i^*(\Pc,n)=( - 1 )^d i(\Pc, - n)$ for every integer $n > 0$. 
%\end{itemize}
 
% Furthermore, we define the sequence 
% $\delta_0, \delta_1, \ldots$ of integers by the formula 
% \begin{eqnarray*}\label{delta}
% (1 - \lambda)^{d + 1}  \sum_{n=0}^{\infty} i(\Pc,n) \lambda^n 
% = \sum_{i=0}^\infty \delta_i \lambda^i. 
% \end{eqnarray*}
% It then follows from a well-known result 
% on generating functions (\cite[Corollary 4.3.1]{StanleyEC})
% that $\delta_i = 0$ for $i > d$. 
% We say that the sequence 
% \[
% \delta(\Pc)= (\delta_0, \delta_1, \ldots, \delta_d)
% \]
% is the {\em $\delta$-vector} of $\Pc$.
%In particular
%$\delta_0=1$, $\delta_1 = |\Pc \cap \ZZ^N| - (d + 1)$
%and $\delta_d = |(\Pc \setminus \partial \Pc) \cap \ZZ^N|$.
%Moreover, $\delta(\Pc) \geq 0$, i.e., each $\delta_i \geq 0$ (\cite{StanleyDRCP}).
%When $\delta_{d} \neq 0$, one has 
%$\delta_{1} \leq \delta_{i}$ for $2 \leq i < d$ (\cite{HibiLBT}).

The purpose of the present paper is, for each $d \geq 4$, 
to show the existence of an integral
convex polytope of dimension $d$
such that each of the coefficients of 
$n, n^{2}, \ldots, n^{d-2}$ of its Ehrhart polynomial
$i(\Pc,n)$ is negative.  In fact,

\begin{Theorem}\label{main}
Given an arbitrary integer $d \geq 4$, 
there exists an integral convex polytope $\Pc$ of dimension $d$ 
such that 
each of the coefficients of $n$, $n^2, \ldots, n^{d-2}$ of 
the Ehrhart polynomial $i(\Pc,n)$ of $\Pc$ is negative. 
\end{Theorem}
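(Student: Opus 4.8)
The plan is to produce, for each $d\ge 4$, an explicit lattice $d$-simplex and to extract its Ehrhart polynomial from its $\delta$-vector. If $\Pc$ has $\delta$-vector $(\delta_0,\delta_1,\dots,\delta_d)$, then
\[
i(\Pc,n)=\sum_{j=0}^{d}\delta_j\binom{n+d-j}{d},
\]
so the coefficient of $n^i$ equals $\frac{1}{d!}\sum_{j=0}^{d}\delta_j\,e_{d-i}(1-j,2-j,\dots,d-j)$, a fixed linear form in the $\delta_j$ (here $e_k$ denotes the $k$-th elementary symmetric polynomial). Since $\delta_0=1$ always and the coefficients of $n^d$ and $n^{d-1}$ are automatically positive, it is enough to find nonnegative integers $\delta_0=1,\delta_1,\dots,\delta_d$ for which the $d-2$ linear forms attached to $n^1,\dots,n^{d-2}$ are simultaneously negative, and then to realize this $\delta$-vector by an actual polytope.

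To realize candidate $\delta$-vectors I would use the simplex $\Pc=\con(0,e_1,\dots,e_{d-1},v)$ with apex $v=(a_1,\dots,a_{d-1},q)\in\ZZ^d$, where $q$ is a large free parameter and $a_1,\dots,a_{d-1}$ are to be chosen. Its normalized volume is $q$, and its $\delta$-vector can be computed combinatorially: lifting the vertices to height $1$ in $\RR^{d+1}$, one counts the lattice points of the half-open fundamental parallelepiped by their height $\sum_i\lambda_i$. The lattice conditions force $\lambda_d=k/q$ for $k\in\{0,1,\dots,q-1\}$ and determine each remaining $\lambda_j$ as the fractional part $\langle -a_jk/q\rangle$; consequently $\delta_s$ is the number of $k\in\{1,\dots,q-1\}$ with $\bigl\lceil k/q+\sum_{j=1}^{d-1}\langle -a_jk/q\rangle\bigr\rceil=s$, together with the contribution $\delta_0=1$ coming from $k=0$.

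The main obstacle is to choose $a_1,\dots,a_{d-1}$ so that, as $q\to\infty$, this height statistic distributes the $\delta$-mass over exactly the positions that drive every middle coefficient negative. The constraints are genuinely $d$-dependent and delicate, and the obvious candidates fail: the evenly spread $\delta$-vector produced by the higher Reeve simplex $v=(1,\dots,1,q)$ already yields a positive coefficient of $n$ when $d=4$, while a $\delta$-vector concentrated at a single position makes some middle coefficient positive as soon as $d\ge 5$. One is therefore forced to shape the distribution non-uniformly. Since coordinates $a_j\equiv\pm1\pmod q$ contribute functions of $k$ that are linear on the whole range $0<k<q$ (and hence give uniform level sets), producing the required monotone, non-uniform profile calls for coordinates with $|a_j|\ge 2$, for which $\langle -a_jk/q\rangle$ is genuinely piecewise linear, so that $\sum_j\langle -a_jk/q\rangle$ has level sets of the prescribed sizes.

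Once the apex is fixed and $q$ is large, the final step is routine: substitute the resulting $\delta$-vector into the coefficient formula above and check that each coefficient of $n^1,\dots,n^{d-2}$ is negative, the leading term in $q$ being governed by the chosen profile while the bounded corrections coming from $\delta_0$ and from the roundings are negligible for large $q$. Throughout, the required positivity of the coefficient of $n^{d-1}$ acts as a constraint limiting how much $\delta$-mass may be placed near the top of the vector, and it serves as a consistency check on the construction.
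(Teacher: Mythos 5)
There is a genuine gap: your argument sets up correct machinery (the expansion $i(\Pc,n)=\sum_{j=0}^{d}\delta_j\binom{n+d-j}{d}$, and the half-open parallelepiped computation of the $\delta$-vector of the simplex with apex $(a_1,\dots,a_{d-1},q)$), but it stops exactly where the proof has to happen. You never specify the coordinates $a_1,\dots,a_{d-1}$, nor equivalently a target $\delta$-vector, and you never verify that the $d-2$ linear forms attached to the coefficients of $n,\dots,n^{d-2}$ are simultaneously negative for every $d\ge 4$. You yourself concede that the obvious candidates fail and that the constraints are ``genuinely $d$-dependent and delicate,'' and then declare the remaining step ``routine'' --- but that remaining step is the entire content of the theorem: making $d-2$ different alternating linear functionals of the $\delta$-vector negative at once requires an explicit family and an explicit inequality, and neither is supplied. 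As written, this is a research plan with the hard part deferred, not a proof.

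For comparison, the paper avoids the direct simplex construction altogether. It starts from the three-dimensional tetrahedron $\Qc_m$ with $i(\Qc_m,n)=\frac{m}{6}n^3+n^2+\frac{-m+12}{6}n+1$ and crosses it $d-3$ times with the segment $[0,d-3]$, using the elementary fact that $i(\Pc\times[0,k],n)=(kn+1)\,i(\Pc,n)$; this gives $i(\Pc_m^{(d)},n)=((d-3)n+1)^{d-3}i(\Qc_m,n)$. Each middle coefficient is then linear in the single free parameter $m$, with slope $-(d-3)^{j-3}g(d,j)/6$ where $g(d,j)=(d-3)^2\binom{d-3}{j-1}-\binom{d-3}{j-3}$, and a short induction shows $g(d,j)>0$. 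Hence one large value of $m$ makes all $d-2$ coefficients negative simultaneously, and attainability is automatic from the product construction. That reduction --- one parameter controlling every coefficient, with a single binomial inequality to check --- is precisely what your approach is missing; to salvage it you would have to exhibit an explicit apex (or an explicitly attainable $\delta$-vector) and prove the corresponding system of inequalities for all $d$.
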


Our proof of Theorem $1$ will be given after preparing Lemmata $2$ and $3$.

\begin{Lemma}
\label{lem1}
Let $\Pc \subset \RR^N$ be an integral convex polytope of dimension $d$ and 
$i(\Pc,n)$ its Ehrhart polynomial. Then, given an arbitrary integer $k > 0$,
there exists an integral convex polytope 
$\Pc'_{k} \subset \RR^{N+1}$ of dimension $d+1$ whose Ehrhart polynomial 
is equal to 
$(kn+1)i(\Pc,n)$.  
\end{Lemma}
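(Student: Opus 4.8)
The plan is to realize the factor $(kn+1)$ geometrically as the number of integer heights of a prism over $\Pc$. First I would embed $\Pc$ in $\RR^{N+1} = \RR^N \times \RR$ in the natural way and set
\[
\Pc'_k = \Pc \times [0,k] = \{(\alpha, t) : \alpha \in \Pc, \ 0 \leq t \leq k\} \subset \RR^{N+1}.
\]
Since every vertex of $\Pc'_k$ has the form $(v,0)$ or $(v,k)$ with $v$ a vertex of $\Pc$ and $k \in \ZZ$, the polytope $\Pc'_k$ is integral; and because $\Pc$ has dimension $d$ while the new coordinate direction is nondegenerate (as $k > 0$), the product has dimension $\dim \Pc'_k = d+1$.

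Next I would compute $i(\Pc'_k, n)$ directly from its definition. For a positive integer $n$ one has $n\Pc'_k = (n\Pc) \times [0, kn]$, and under the identification $\ZZ^{N+1} = \ZZ^N \times \ZZ$ a point $(\beta, j)$ lies in $n\Pc'_k \cap \ZZ^{N+1}$ if and only if $\beta \in n\Pc \cap \ZZ^N$ together with $j \in \ZZ$ satisfying $0 \leq j \leq kn$. Hence the lattice points split as a product, giving
\[
i(\Pc'_k, n) = \sharp(n\Pc \cap \ZZ^N)\cdot \sharp([0,kn] \cap \ZZ) = i(\Pc, n)\,(kn+1).
\]
Since this identity holds for every positive integer $n$ and $(kn+1)\,i(\Pc,n)$ is a polynomial in $n$, it must be precisely the Ehrhart polynomial of $\Pc'_k$, which is exactly the claim.

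The construction is transparent, so the only points that genuinely need care are the two verifications accompanying it: that the product $\Pc \times [0,k]$ really attains dimension $d+1$ rather than collapsing, and, more importantly, that the lattice-point count factors cleanly. The latter is the crux of the argument, but it follows at once from the facts that the dilation $n(\cdot)$ acts coordinatewise on the product and that $\ZZ^{N+1}$ is the direct product $\ZZ^N \times \ZZ$; consequently no real obstacle arises, and the height-$k$ segment contributes exactly the $kn+1$ integer levels that produce the desired linear factor.
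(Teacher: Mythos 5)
Your proposal is correct and follows exactly the paper's approach: the paper's proof also takes $\Pc'_k = \Pc \times [0,k]$ and observes that its Ehrhart polynomial is $(kn+1)\,i(\Pc,n)$. You simply spell out the details (integrality, dimension, and the factorization of the lattice-point count) that the paper dismisses as immediate.
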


\begin{proof}
It follows immediately that the Ehrhart polynomial $i(\Pc'_{k},n)$ of 
the integral convex polytope $\Pc'_{k}=\Pc \times [0,k] \subset \RR^{N+1}$
% \[
% \Pc'_{k}=\Pc \times [0,k]=
% \con(\{ (\alpha,0),(\alpha,k) \in \RR^{N+1} \, : \, \alpha \in \Pc \})
% \] 
coincides with $(kn+1)i(\Pc,n)$.
\end{proof}

\begin{Lemma}
\label{lem2}
Let $d$ and $j$ be integers with $d \geq 5$ and $3 \leq j \leq d-2$, and
\[
g(d,j)=(d-3)^2 \binom{d-3}{j-1}-\binom{d-3}{j-3}.
\] 
Then one has $g(d,j) > 0$. 
\end{Lemma}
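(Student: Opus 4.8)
The plan is to bound the ratio of the two binomial coefficients from below and show it is dominated by the polynomial factor $(d-3)^2$. Writing $m = d-3 \geq 2$ to lighten notation, I want to prove $m^2\binom{m}{j-1} > \binom{m}{j-3}$ for all relevant $j$. Since both binomials are evaluated at arguments that differ by $2$ in the lower index, the natural first step is to express their quotient explicitly:
\[
\frac{\binom{m}{j-3}}{\binom{m}{j-1}} = \frac{(j-1)!\,(m-j+1)!}{(j-3)!\,(m-j+3)!} = \frac{(j-1)(j-2)}{(m-j+3)(m-j+2)}.
\]
So the inequality $g(d,j) > 0$ is equivalent to $m^2 (m-j+3)(m-j+2) > (j-1)(j-2)$.

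The key observation is that with $3 \leq j \leq d-2 = m+1$, each factor on both sides is a bounded, well-controlled quantity. On the right, $(j-1)(j-2) \leq (m)(m-1) < m^2$. On the left, the factor $(m-j+3)(m-j+2)$ is minimized at the largest admissible $j$; since $j \leq m+1$, we have $m - j + 2 \geq 1$ and $m-j+3 \geq 2$, so $(m-j+3)(m-j+2) \geq 2$. Therefore the left side is at least $2m^2$, while the right side is strictly less than $m^2$, giving the strict inequality at once. I should double-check the endpoint $j = m+1$ (equivalently $j = d-2$), where the left factor attains its minimum value $2 \cdot 1 = 2$, to confirm no degenerate cancellation occurs; this is the only case where the margin is tightest.

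The step I expect to require the most care is verifying that the binomial coefficients appearing are genuinely nonzero over the stated range, so that the quotient manipulation is valid: one needs $0 \leq j-1 \leq m$ and $0 \leq j-3 \leq m$, which hold precisely because $3 \leq j \leq m+1$. Once the range check is in place, the rest is the elementary estimate above, and the factor $(d-3)^2$ in the definition of $g(d,j)$ provides ample room — indeed a cruder bound would suffice — so the main obstacle is bookkeeping with the index shifts rather than any genuine analytic difficulty.
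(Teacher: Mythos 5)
Your proof is correct, but it takes a genuinely different route from the paper's. The paper argues by induction on $d$: it first verifies the endpoint cases $j=3$ and $j=d-2$ directly (which in particular settles $d=5$ and $d=6$), and then, for $d \geq 7$ and $4 \leq j \leq d-3$, applies Pascal's identity to both binomial coefficients together with $(d-3)^2=(d-4)^2+2d-7$ to obtain the recursion
\[
g(d,j) = g(d-1,j) + g(d-1,j-1) + (2d-7)\binom{d-3}{j-1},
\]
from which positivity follows by the induction hypothesis. You instead prove the inequality in one shot: with $m=d-3\geq 2$, the exact quotient
\[
\binom{m}{j-3}\Big/\binom{m}{j-1} = \frac{(j-1)(j-2)}{(m-j+3)(m-j+2)}
\]
reduces the claim to $m^2(m-j+3)(m-j+2) > (j-1)(j-2)$, and your bounds $(m-j+3)(m-j+2)\geq 2$ and $(j-1)(j-2)\leq m(m-1)<m^2$ hold on the whole range $3\leq j\leq m+1$, including the tightest endpoint $j=m+1$; your check that both lower indices lie in $[0,m]$ (so the quotient manipulation is legitimate and $\binom{m}{j-1}>0$) is exactly the right thing to verify and it holds. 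Comparing the two: your argument is shorter, avoids induction entirely, and gives an explicit quantitative margin (a factor of at least $2$); the paper's argument stays entirely within integer identities, never dividing by a binomial coefficient, and exhibits the structural fact that $g$ satisfies a Pascal-type recurrence, but as a proof of this particular lemma it is longer than yours.
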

\begin{proof}
Since $d \geq 5$, one has  
$g(d,3)=(d-3)^2 \binom{d-3}{2}-1>0$ and
$g(d,d-2)=(d-3)^2-\binom{d-3}{2}>0$. 
Thus $g(d,j) > 0$
for $j = 3$ and $j = d - 2$.
Especially the assertion is true for $d = 5$ and $d = 6$. 

We now work with induction on $d$. 
Let $d \geq 7$ and $4 \leq j \leq d-3$.  Then
\begin{align*}
g(d,j)&=((d-4)^2+2d-7)\left(\binom{d-4}{j-1}+\binom{d-4}{j-2}\right)
-\left(\binom{d-4}{j-3}+\binom{d-4}{j-4}\right)\\
&=g(d-1,j)+g(d-1,j-1)+(2d-7)\binom{d-3}{j-1}. 
\end{align*}
It follows from the assumption of induction that
$g(d-1,j)+g(d-1,j-1)>0$.
Hence $g(d,j)>0$, as desired. 
\end{proof}

\begin{proof}[Proof of Theorem \ref{main}]
It is known \cite[Example 3.22]{BeckRobins} that,
given an arbitrary integer $m \geq 1$, 
there exists an integral convex polytope $\Qc_m$ of dimension 3 with 
%$\delta(\Qc_m) = (1, 0, m-1, 0)$, i.e., 
\[
i(\Qc_m, n)=\frac{\,m\,}{6}n^3+n^2+\frac{\,-m+12\,}{6}n+1.
\]
Given an arbitrary integer $d \geq 4$,
applying Lemma \ref{lem1} with $k = d - 3$ repeatedly yields
an integral convex polytope $\Pc_m^{(d)}$ of dimension $d$ such that 
\begin{align*}
i(\Pc_m^{(d)},n) & = ((d-3)n+1)^{d-3}i(\Qc_m,n) \\
& = ((d-3)n+1)^{d-3}\left(\frac{\,m\,}{6}n^3+n^2+\frac{\,-m+12\,}{6}n+1\right). 
\end{align*}
Let $i(\Pc_m^{(d)},n) = \sum_{i=0}^d c_i^{(d,m)}n^i$ with each $c_i^{(d,m)} \in \QQ$. Then 
\[
c_1^{(d,m)}=\frac{\,-m+12\,}{6}+A_1, \quad\quad
c_2^{(d,m)}=1+\frac{\,-m+12\,}{6}A_1+A_2
\]
and
\[
c_j^{(d,m)}=\frac{\,m\,}{6}A_{j-3}+A_{j-2}+\frac{\,-m+12\,}{6}A_{j-1}+A_j,
\quad\quad
3 \leq j \leq d-2, 
\]
where 
\[
A_i=(d-3)^i\binom{d-3}{i}, \quad\quad 0 \leq i \leq d-2. 
\]

Now, since each $A_j$ is independent of $m$, 
it follows that each of $c_1^{(d,m)}$ and $c_2^{(d,m)}$ is
negative for $m$ sufficiently large.
Let $3 \leq j \leq d-2$.  One has 
\begin{eqnarray*}
c_j^{(d,m)}&=&-\frac{\,A_{j-1}-A_{j-3}\,}{6}m+(A_{j-2}+2A_{j-1}+A_j) \\
&=&-(d-3)^{j-3}\frac{\,g(d,j)\,}{6}m+(A_{j-2}+2A_{j-1}+A_j),
\end{eqnarray*}
where $g(d,j)$ is the same function as in Lemma \ref{lem2}.
Since $g(d,j)>0$, it follows that 
$c_j^{(d,m)}$ can be negative for $m$ sufficiently large. 
Hence, for $m$ sufficiently large, the integral convex polytope 
$\Pc_m^{(d)}$ of dimension $d$ enjoys the required property.
\end{proof}

We conclude this paper with

\begin{Remark}
{\em
The polynomial
\begin{align*}
i(\Qc_m, n) &= \frac{\,m\,}{6}n^3+n^2+\frac{\,-m+12\,}{6}n+1 \\
&= \frac{\,1\,}{6}(n + 1)(m n^{2} + (6 - m) n + 6)
\end{align*}
has a real positive zero for $m$ sufficient large.  Hence
$i(\Pc_m^{(d)},n)$ has a real positive zero for $m$ sufficient large. 

Thus in particular, for $m$ sufficient large and for an arbitrary integral
convex polytope $\Qc$, the Ehrhart polynomial
$i(\Pc_m^{(d)} \times \Qc,n)$ of $\Pc_m^{(d)} \times \Qc$ 
also possesses a negative coefficient.
}
\end{Remark} 

We are grateful to Richard Stanley for his suggestion on real positive roots of
Ehrhart polynomials.

\end{document}